\newcommand{\C}{\mathbb{C}}
\newcommand{\Q}{\mathbb{Q}}
\newcommand{\Z}{\mathbb{Z}}
\newcommand{\F}{\mathbb{F}}
\newcommand{\disc}{\mathrm{disc}}
\newcommand{\p}{\mathfrak{p}}
\newcommand{\q}{\mathfrak{q}}
\newcommand{\m}{\mathfrak{m}}
\newcommand{\Gal}{\mathrm{Gal}}
\newcommand{\Fix}{\mathrm{Fix}}
\newtheorem{thm}{Theorem}
\newtheorem{proposition}[thm]{Proposition}
\newtheorem{corollary}[thm]{Corollary}
\newtheorem{definition}[thm]{Definition}
\begin{document}
\title{Morphisms of Skew Hadamard Matrices}
\author{\textsc{Philip Heikoop}
	\thanks{\textit{E-mail: ptheikoop@wpi.edu}}\\
\textsc{Guillermo Nu{\~n}ez Ponasso}
\thanks{\textit{Email: gcnunez@wpi.edu}}\\
\textsc{Padraig \'{O} Cath\'{a}in}
\thanks{\textit{Email: pocathain@wpi.edu}}\\
\textsc{John Pugmire}
\thanks{\textit{Email: pugmire@ucsb.edu}} \\
\textit{\footnotesize{School of Mathematical Sciences}}\\
\textit{\footnotesize{Worcester Polytechnic Institute, MA 01609, USA}}\\
\date{\today}
}
\maketitle

\begin{abstract}
Quaternary unit Hadamard (QUH) matrices were introduced by Fender, Kharaghani and Suda along with a method to construct them at prime power orders. We present a novel construction of real Hadamard matrices from QUH matrices. Our construction recovers the result by Mukhopadhyay on  the existence of real Hadamard matrices of order $q^n+q^{n-1}$ for each prime power $q\equiv 3\mod 4$, and $n\geq 1$. Furthermore we provide nonexistence conditions for QUH matrices.
\end{abstract}

\section{Introduction}
A celebrated theorem of Hadamard characterises the complex matrices with entries of norm at most one which have maximal determinant: they are precisely the solutions to the matrix equation $HH^{\ast} = nI_{n}$ satisfying $|h_{ij}| = 1$ for all $1 \leq i,j\leq n$. Equivalently, all entries of $H$ have unit norm, and all rows are mutually orthogonal under the Hermitian inner product, \cite{Hadamard1893}. Real Hadamard matrices, having entries in $\{\pm1\}$, have been extensively studied for a century, though the existence problem is far from settled. We refer the reader to the recent monographs of Horadam and of de Launey and Flannery for extensive discussion of Hadamard matrices, \cite{HoradamHadamard, deLauneyFlannery}.

In this paper we will study the problem of constructing real Hadamard matrices from complex Hadamard matrices (CHM). Suppose that $X$ is a set of complex numbers of modulus $1$.
We define $\mathcal{H}(n, X)$ to be the set of $n \times n$ Hadamard matrices with entries drawn from $X$. In the special case that $X$ is the set of $k^{\textrm{th}}$ roots of unity, a CHM is called a \textit{Butson Hadamard matrix}; the set of such matrices is denoted $\mathcal{BH}(n,k)$. Examples of Butson Hadamard matrices are furnished by the character tables of abelian groups of order $n$ and exponent $k$. Cohn and Turyn proved independently that the existence of $H \in \mathcal{BH}(n,4)$ implies the existence of a real Hadamard matrix of order $2n$, \cite{Cohn65, Turyn}. More recently, Compton, Craigen and de Launey proved that an $n \times n$ matrix with entries in the \textit{unreal} sixth roots of unity $\{\omega_{6}, \omega_{6}^{2}, \omega_{6}^{4}, \omega_{6}^{5}\}$ can be used to construct a real Hadamard matrix of order $4n$, \cite{CCdeL}.

A general construction for mappings between sets of Butson Hadamard matrices is described by Egan and one of the present authors, \cite{mypaper-morphisms}. A key ingredient in the construction is a matrix $H \in \mathcal{BH}(n, k)$ with minimal polynomial $\Phi_{t}(x)$ for some integer $t$. The construction of such matrices was considered further in collaboration with
Eric Swartz, \cite{mypaper-explicitmorphisms}. In all the examples considered previously, matrix entries are roots of unity, and all fields considered are cyclotomic. In this paper, we consider a family of complex Hadamard matrices with entries in the biquadratic extension $\mathbb{Q}[\sqrt{-q}, \sqrt{q+1}]$. When the matrix entries are all in the set $X_{q} = \{ \frac{\pm 1\pm \sqrt{-q}}{\sqrt{q+1}} \}$, such a matrix is called a \textit{Quaternary Unit Hadamard matrix}, abbreviated QUH. Such matrices were first considered by Fender, Kharaghani and Suda, \cite{FKS}.

We will construct a morphism from QUH matrices onto real Hadamard matrices, using skew-Hadamard matrices. This provides a new construction for a family of Hadamard matrices of order $q^{n} + q^{n-1}$ for each prime power $q \equiv 3 \mod 4$ and each $n\geq 1$, previously constructed by Mukhopadhyay and studied further by Seberry, \cite{Mukh, SeberrySkew}. We conclude the paper by studying the decomposition of prime ideals in the field $\mathbb{Q}[\sqrt{-q}, \sqrt{q+1}]$ to obtain non-existence results for QUH matrices in the style of Winterhof \cite{WinterhofExistence}.

\section{Morphisms of QUH matrices}

In this section we construct an isomorphism of fields, which we lift to an isomorphism of matrix algebras. We prove that this isomorphism carries a QUH matrix in the set $\mathcal{H}(n, X_{m})$ to a real Hadamard matrix of order $n(m+1)$; that is, the isomorphism is a \textit{morphism} of complex Hadamard matrices. We will require some standard results in algebra, as discussed in, e.g., Chapters 17--19 of Isaacs' \textit{Graduate Algebra}, \cite{Isaacs}. An \textit{extension field} $k$ of $\Q$ is a field containing $\Q$ as a subfield; in this case $k$ is a $\Q$-vector space and its \textit{degree} is its dimension as a vector space. The degree of $k$ over $\Q$ is denoted by $[k:\Q]$.  In the ring $\mathbb{Q}[x]$ every ideal contains a unique monic polynomial of minimal degree, this polynomial is irreducible if and only if the ideal is maximal. For a polynomial $p(x)$ the quotient $\mathbb{Q}[x]/\left(p(x) \right)$ is a field if and only if the polynomial $p(x)$ is irreducible. An extension field $k$ is the \textit{splitting field} of a polynomial $p(x) \in \mathbb{Q}[x]$ if $k$ is a field of minimal degree over $\mathbb{Q}$ which contains all the roots of $p(x)$. We apply these results to the polynomial $\mathfrak{m}(x) = x^{4} + \frac{2(m-1)}{m+1}x^2+1$. By abuse of notation, a Hadamard matrix is \textit{skew} if $H-I$ is a skew-symmetric matrix.

\begin{proposition}\label{fieldprop}
Let $H$ be a skew-Hadamard matrix of order $m+1$, where $m+1$ is not a perfect square.
The minimal polynomial of $\alpha_{m} = \frac{1+\sqrt{-m}}{\sqrt{m+1}}$ and the minimal
polynomial of $\frac{1}{\sqrt{m+1}}H$ are both equal to
\[ \mathfrak{m}(x) = x^4+\frac{2(m-1)}{m+1}x^2+1 \,.\]
\end{proposition}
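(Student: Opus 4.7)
The plan is to check that both $\alpha_m$ and $A = \frac{1}{\sqrt{m+1}}H$ satisfy $\mathfrak{m}(x)$, and then to verify that $\mathfrak{m}(x)$ is irreducible over $\Q$ under the hypothesis that $m+1$ is not a perfect square. Since a minimal polynomial divides every annihilating polynomial and the only non-constant monic divisor of an irreducible polynomial is itself, the minimal polynomial of each object must then equal $\mathfrak{m}(x)$.

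For $\alpha_m$, I would first compute $\alpha_m^2 = \frac{1-m+2\sqrt{-m}}{m+1}$, isolate the surd on one side, and square a second time to eliminate $\sqrt{-m}$; after using the simplification $(1-m)^2 + 4m = (m+1)^2$, the identity $\alpha_m^4 + \frac{2(m-1)}{m+1}\alpha_m^2 + 1 = 0$ drops out. For $A$, the skew condition $H + H^T = 2I$ combined with $HH^T = (m+1)I$ gives $H^2 = 2H - (m+1)I$, equivalently $A^2 + I = \frac{2}{\sqrt{m+1}}A$; squaring this identity clears $\sqrt{m+1}$ and yields $\mathfrak{m}(A) = 0$ after the same algebraic simplification.

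The main obstacle is verifying the irreducibility of $\mathfrak{m}(x)$ over $\Q$. The roots of $\mathfrak{m}(x)$ are $\frac{\pm 1 \pm \sqrt{-m}}{\sqrt{m+1}}$, all of which have nonzero imaginary part for $m \geq 1$, so $\mathfrak{m}$ has no rational roots and hence no linear factor over $\Q$. It therefore suffices to rule out factorizations into two rational quadratics. Writing $\mathfrak{m}(x) = (x^2+ax+c)(x^2-ax+c')$, where the absence of an $x^3$ coefficient has already forced the linear terms to be opposite, comparison of the remaining coefficients forces either $a = 0$ or $c = c'$. In the case $a = 0$, the pair $c, c'$ must be roots of $y^2 - \frac{2(m-1)}{m+1}y + 1 = 0$, whose discriminant simplifies to $-\frac{16m}{(m+1)^2} < 0$. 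In the case $c = c'$, one has $c^2 = 1$; if $c = -1$ then $a^2 = -\frac{4m}{m+1} < 0$, while if $c = 1$ then $a^2 = \frac{4}{m+1}$, which is a rational square precisely when $m+1$ is a perfect square. Both possibilities are excluded by hypothesis, so $\mathfrak{m}(x)$ is irreducible over $\Q$, completing the argument.
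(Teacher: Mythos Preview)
Your proof is correct and follows essentially the same strategy as the paper: verify that both $\alpha_m$ and $\frac{1}{\sqrt{m+1}}H$ are annihilated by $\mathfrak{m}(x)$ (using the skew-Hadamard identities $HH^{\top}=(m+1)I$ and $H+H^{\top}=2I$), and then show that $\mathfrak{m}(x)$ is irreducible over $\Q$. The only tactical difference is in the irreducibility step: the paper lists the four complex roots $\pm\alpha_m,\pm\alpha_m^{\ast}$ and checks that the unique pair of real quadratic factors $x^{2}\pm \tfrac{2}{\sqrt{m+1}}x+1$ fails to lie in $\Q[x]$ when $m+1$ is not a perfect square, whereas you reach the same conclusion via an undetermined-coefficients analysis of a putative factorisation $(x^{2}+ax+c)(x^{2}-ax+c')$.
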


\begin{proof}
It is easily checked that $\alpha_m$ is a root of $\m(x)$. Since $\m(x)$ is even, $-\alpha_{m}$ is also a root. The coefficients of $\m(x)$ are real, thus $\alpha_{m}^{\ast}$ and $-\alpha_{m}^{\ast}$ are roots. From the fact that $\m(x)$ has degree $4$, we conclude that these are all the possible roots. Therefore we obtain the factorisation \[\m(x)=(x-\alpha_m)(x-\alpha_m^{\ast})(x+\alpha_m)(x+\alpha_m^{\ast})\,.\]
Clearly $\m(x)$ has no linear factors in $\Q[x]$. The only possible quadratic factors with real entries are $(x-\alpha_m)(x-\alpha_m^{\ast})=x^2-2x/\sqrt{m+1}+1$ and $(x+\alpha_m)(x+\alpha_m^{\ast})=x^2+2x/\sqrt{m+1}+1$. By hypothesis, $m+1$ is not a perfect square so these factors are not in $\Q[x]$. We have shown that $\m(x)$ is irreducible.
The field extension $\Q[\alpha_{m}]$ contains $\alpha^{-1} = \alpha_{m}^{\ast}$ and $-\alpha_{m}$, so it is the splitting field of $\m(x)$.

Since $H$ is skew-Hadamard we have both $HH^{\top} = (m+1)I_{m+1}$ and $H^{\top} = 2I - H$. It follows that $H(2I-H) = (m+1)I$, or $H^{2} = 2H - (m+1)I$.
Hence,
\[ \left(\frac{1}{\sqrt{m+1}} H\right)^{2} = \frac{2}{m+1} H - I \,.\]
We also compute that
\begin{eqnarray*}
 \left(\frac{1}{\sqrt{m+1}} H\right)^{4} & = & \frac{4}{(m+1)} \left(\frac{1}{\sqrt{m+1}} H\right)^{2} - \frac{4}{m+1}H + I \\
 & = &  \frac{4}{(m+1)} \left(\frac{1}{\sqrt{m+1}} H\right)^{2} - 2 \left( \frac{2}{m+1}H - I \right) - I \\
 & = &  \frac{4}{(m+1)} \left(\frac{1}{\sqrt{m+1}} H\right)^{2} - 2 \left(\frac{1}{\sqrt{m+1}} H\right)^{2} - I \\
 & = & \frac{2-2m}{m+1} \left(\frac{1}{\sqrt{m+1}} H\right)^{2} - I
\end{eqnarray*}
We conclude that the unitary matrix $\frac{1}{\sqrt{m+1}}H$ is a root of polynomial $\mathfrak{m}(x)$, which must be the minimal polynomial of $\frac{1}{\sqrt{m+1}}H$ by irreducibility.
\end{proof}

When $m+1$ is a perfect square, the polynomial $\m(x)$ factors into two irreducible quadratic factors in $\Q[x]$, which correspond to the distinct minimal polynomials of $\alpha_m$ and $-\alpha_m$. In this case, the minimal polynomials of $\alpha_m$ and $\frac{1}{\sqrt{m+1}}H$ coincide, and also the minimal polynomials of $-\alpha_m$ and $\frac{1}{\sqrt{m+1}}(H-2I)$ coincide. The case that $m+1$ is a perfect square will be discussed after the proof of Theorem \ref{skewmorphism}. From Proposition \ref{fieldprop}, the next result is immediate.

\begin{proposition}\label{isomorphism} If $H$ is a skew-Hadamard matrix of order $m$, then all of the following $\Q$-algebras are isomorphic:
  \begin{equation}\label{isomorphismEq}
  	\Q[x]/(\mathfrak{m}(x))\simeq \Q\left[\frac{1}{\sqrt{m+1}}H\right] \simeq \Q[\alpha_{m}].
  \end{equation}
\end{proposition}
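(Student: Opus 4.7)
The plan is to deduce this proposition as a direct corollary of Proposition \ref{fieldprop} by invoking the standard correspondence between quotients of $\Q[x]$ and simple algebraic extensions. First I would observe that the assumption in Proposition \ref{fieldprop} that $m+1$ is not a perfect square should be in force here as well, so that $\m(x)$ is irreducible over $\Q$ and $\Q[x]/(\m(x))$ is genuinely a field.

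Next I would consider the two evaluation homomorphisms of $\Q$-algebras:
\[ \varepsilon_{\alpha}\colon \Q[x] \to \Q[\alpha_{m}], \qquad p(x) \mapsto p(\alpha_{m}), \]
and
\[ \varepsilon_{H}\colon \Q[x] \to \Q\!\left[\tfrac{1}{\sqrt{m+1}}H\right], \qquad p(x) \mapsto p\!\left(\tfrac{1}{\sqrt{m+1}}H\right). \]
Both maps are surjective onto their respective targets by definition of the subalgebra generated by a single element. By Proposition \ref{fieldprop}, the minimal polynomial of each generator is $\m(x)$, so the kernel of each evaluation map is the principal ideal $(\m(x))$: indeed, the kernel consists of exactly those polynomials annihilating the generator, and in $\Q[x]$ this ideal is generated by the monic polynomial of minimal degree with this property.

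Applying the first isomorphism theorem to each map then yields
\[ \Q[x]/(\m(x)) \simeq \Q[\alpha_{m}] \qquad \text{and} \qquad \Q[x]/(\m(x)) \simeq \Q\!\left[\tfrac{1}{\sqrt{m+1}}H\right], \]
and transitivity of isomorphism gives the chain in \eqref{isomorphismEq}. There is no real obstacle: the only subtlety is ensuring that the hypotheses of Proposition \ref{fieldprop} (in particular, the non-square condition on $m+1$) are available so that $\m(x)$ is actually the minimal polynomial in both cases; once this is granted, the result is a textbook application of the evaluation-map construction for simple algebraic extensions.
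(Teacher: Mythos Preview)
Your argument is correct and is exactly what the paper intends: the authors simply state that the result is immediate from Proposition~\ref{fieldprop}, and your evaluation-map/first-isomorphism-theorem reasoning is the standard way to unpack that claim. Your remark that the non-square hypothesis on $m+1$ from Proposition~\ref{fieldprop} must be carried over is well taken, since without it $\m(x)$ is reducible and the quotient $\Q[x]/(\m(x))$ would not be a field.
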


\begin{definition}
A \textit{Quaternary Unit Hadamard} (QUH) matrix is an element of $\mathcal{H}(n,X_m)$, where
\[ X_{m} = \left\{ \frac{\pm 1 \pm \sqrt{-m} } {\sqrt{m+1}} \right\}\,. \]
\end{definition}

Now we give the main result of this section.

\begin{thm}\label{skewmorphism}
If there exists a skew-Hadamard matrix $H$ of order $m+1$, where $m+1$ is not a perfect square, there exists a morphism $\mathcal{H}(n,X_m)\rightarrow BH(nm+n,2)$.
\end{thm}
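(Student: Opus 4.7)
The plan is to lift the $\Q$-algebra isomorphism $\phi: \Q[\alpha_m] \to \Q[\tfrac{1}{\sqrt{m+1}}H]$ from Proposition \ref{isomorphism} (sending $\alpha_m$ to $\tfrac{1}{\sqrt{m+1}}H$) to an entrywise map on matrices, and then check that it sends a QUH matrix to a rescaled real Hadamard matrix. First I would compute the images of the four elements of $X_m$: since $|\alpha_m| = 1$ we have $\alpha_m^{-1} = \alpha_m^{\ast}$, and since $H$ is skew-Hadamard we have $\left(\tfrac{1}{\sqrt{m+1}}H\right)^{-1} = \tfrac{1}{\sqrt{m+1}}H^{\top}$, so multiplicativity of $\phi$ forces $\pm \alpha_m^{\ast}$ to map to $\pm\tfrac{1}{\sqrt{m+1}}H^{\top}$. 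Thus every element of $X_m$ maps to an $(m+1) \times (m+1)$ matrix with entries in $\{\pm \tfrac{1}{\sqrt{m+1}}\}$ (using $H^{\top} = 2I - H$ and that the skew-Hadamard condition forces the diagonal of $H$ to consist of ones).

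Next I would extend $\phi$ entrywise to a map $\Phi: M_n(\Q[\alpha_m]) \to M_{n(m+1)}(\mathbb{R})$ by viewing the image as an $n \times n$ block matrix with $(m+1) \times (m+1)$ blocks. Block-multiplication gives $\Phi(AB) = \Phi(A)\Phi(B)$, and the crucial identity $\Phi(M^{\ast}) = \Phi(M)^{\top}$ holds because $\phi$ carries the complex-conjugation involution $\alpha_m \mapsto \alpha_m^{\ast}$ on $\Q[\alpha_m]$ to the transpose involution $H \mapsto H^{\top}$ on $\Q[\tfrac{1}{\sqrt{m+1}}H]$: both involutions invert the distinguished generator, so they correspond under $\phi$ automatically. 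Given $Q \in \mathcal{H}(n, X_m)$, the Hadamard relation $QQ^{\ast} = nI_n$ then yields
\[ \Phi(Q)\Phi(Q)^{\top} \;=\; \Phi(Q)\Phi(Q^{\ast}) \;=\; \Phi(QQ^{\ast}) \;=\; n I_{n(m+1)} \,. \]
The rescaled matrix $R = \sqrt{m+1}\,\Phi(Q)$ therefore has entries in $\{\pm 1\}$ and satisfies $RR^{\top} = n(m+1)I_{n(m+1)}$, so $R \in \mathcal{BH}(nm+n,2)$, and the assignment $Q \mapsto R$ is the desired morphism.

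The main obstacle is verifying the intertwining property $\Phi(M^{\ast}) = \Phi(M)^{\top}$; once it is in hand, the Hadamard condition transports along $\Phi$ essentially for free. The intertwining rests on the parallel identities $\alpha_m \alpha_m^{\ast} = 1$ on the source and $HH^{\top} = (m+1)I$ on the target, which allow both ``stars'' to be rewritten as multiplicative inverses in their respective algebras --- and any algebra isomorphism commutes with inversion.
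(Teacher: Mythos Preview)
Your proposal is correct and follows essentially the same route as the paper: both lift the isomorphism $\alpha_m \mapsto \tfrac{1}{\sqrt{m+1}}H$ to an entrywise block map, use the identity $\phi(z^{\ast}) = \phi(z)^{\top}$ (which the paper invokes via $\varphi(c_{j,k})^{\top} = \varphi(c_{j,k}^{\ast})$ in its column computation), and then rescale by $\sqrt{m+1}$ to obtain a $\{\pm 1\}$-matrix. Your framing in terms of a multiplicative map $\Phi$ on matrix algebras and the explicit intertwining $\Phi(M^{\ast}) = \Phi(M)^{\top}$ is slightly more abstract than the paper's direct inner-product calculation, but the substance is identical.
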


\begin{proof}
Assume that there exists $M\in \mathcal{H}(n,X_m)$, since otherwise the claim is vacuous.
By Proposition \ref{isomorphism} that there exists an isomorphism $\mathbb{Q}(\alpha_{m}) \rightarrow \mathbb{Q}(\frac{1}{\sqrt{m+1}}H)$. We make this explicit:
\begin{equation*}
	\varphi: \alpha_m\mapsto \frac{1}{\sqrt{m+1}}H
\end{equation*}
and since $\alpha_m$ is a generator of $\mathbb{Q}(\alpha_{m})$ the function $\varphi$ extends uniquely to the whole field. Recalling that $H$ is skew, we obtain
\[ \varphi(-\alpha_m)=\frac{-1}{\sqrt{m+1}}H = \frac{1}{\sqrt{m+1}}(H-2I)^{\top},\,\,\, \varphi(\alpha_{m}^{\ast}) = \frac{1}{\sqrt{m+1}}H^{\top}\,. \]

Define $M^\varphi$ to be the block matrix obtained from $M$ by applying $\varphi$ entrywise. Then $M^{\varphi}$ is a real matrix  of size $n(m+1)\times n(m+1)$ with entries in the set $\{\pm 1/\sqrt{m+1}\}$. Since $M\in \mathcal{H}(n,X_m)$ the (Hermitian) inner product of columns $c_{i}$ and $c_{j}$ of $M$ is $\langle c_i, c_j\rangle = n\delta_{i}^{j}$, where $\delta_{i}^{j}$ is the Kronecker $\delta$ function. Since $\varphi$ is an isomorphism of $\Q$-algebras, $\varphi(0)=\mathbf{0}_{m+1}$ and $\varphi(1)= I_{m+1}$. It follows that
  \begin{align*}
	\sum_{k} \varphi(c_{i,k})\varphi(c_{j,k})^{\top} &= \sum_{k} \varphi(c_{i,k})\varphi(c_{j,k}^{\ast})\\
	&= \varphi\left(\sum_k c_{i,k}c_{j,k}^{\ast}\right)\\
	&= \varphi(\langle c_i, c_j\rangle)\\
	&= n\delta_{i}^{j}I_{m+1}\,.
  \end{align*}
This shows that $M^{\varphi}\left(M^{\varphi}\right)^{\top} = nI_{n(m+1)}$. The entries of $M^{\varphi}$ are in the set $\{\pm 1/\sqrt{m+1}\}$, so the entries of $\sqrt{m+1}M^{\varphi}$ are in the set $\{\pm 1\}$. We have shown that
  \begin{equation*}
	\sqrt{m+1}M^{\varphi}\left(\sqrt{m+1}M^{\varphi}\right)^\top = n(m+1)I_{n(m+1)} \,,
  \end{equation*}
which establishes the theorem.
\end{proof}

A less technical method to prove the above theorem without assumptions on $m+1$ is as follows: Let $H\in\mathcal{H}(n, X_m)$, and let 
\[H = \frac{1}{\sqrt{m+1}}A + \frac{\sqrt{-m}}{\sqrt{m+1}}B,\]
where $A$ and $B$ are $\pm 1$ matrices of order $n$. Then 
\[AB^{\top} = BA^{\top}\hbox{ and } AA^{\top}+BB^{\top} = n(m+1)I_n.\]

Let $M$ be a skew Hadamard matrix of order $m+1$. Substituting $A$ for the diagonal entries of $M$ and $\pm B$ for the off-diagonal entries $\pm 1$ of $M$, it can be verified that the resulting matrix will be a Hadamard matrix of order $n(m+1)$. Although this proof is simpler than that of Theorem 4, our method gives additional insights into existence and non-existence of QUH matrices, as demonstrated in Section 3.\\

Let $q$ be an odd prime power and $\F_q$ be a finite field with $q$ elements. The element $a \in \mathbb{F}_{q}$ is a \textit{quadratic residue} if there exists $y \in \mathbb{F}_{q}$ such that $y^{2} = a$. Otherwise, $a$ is a non-residue.  The \textit{quadratic character} is defined to be $\chi_q(a)=1$ if $a \in \F_q^*=\F_q-\{0\}$ is a quadratic residue in $\F_q$, $\chi_q(a)=-1$ if $a\in \F_q^*$ is a quadratic non-residue in $\F_q$ and $\chi_q(0)=0$. In the case where $q=p$ is a prime number, the quadratic character $\chi_p(a)$ on $\F_p\simeq \Z/p\Z$ can be identified with the \textit{Legendre symbol} and is denoted $(a/p)$. Later we will use the fact that for a fixed prime $p$ and for every $a,b\in\Z$, $\left(ab/{p}\right)=\left({a}/{p}\right)\left({b}/{p}\right)$, \cite[Proposition 5.1.2]{IrelandRosen}. Let $\{g_0=0,g_1,\dots,g_{q-1}\}$ be an enumeration of $\F_q$ then $Q = \left[ \chi_q(g_i-g_j)\right]_{0\leq i,j\leq q-1}$ is the \textit{Jacobsthal matrix} of order $q$.

\begin{thm}[Section 3, \cite{FKS}]
Let $q$ be an odd prime power with $q\equiv 3\pmod 4$. Define $1\times 1$ matrices $A_{0} = B_{0} = 1$, let $Q$ be the $q \times q$ Jacobsthal matrix and $J_q$ the $q\times q$ all-ones matrix. For each $t\geq 1$, define
\[ A_{t} = J_{q} \otimes B_{t-1}, \,\,\, B_{t} = I_{q} \otimes A_{t-1} + Q \otimes B_{t-1} \,.\]
Then for each $t$ the matrix $\frac{1}{\sqrt{q+1}}A_{t} + i \frac{\sqrt{q}}{\sqrt{q+1}} B_{t}$ is a matrix in $\mathcal{H}(q^t,X_{q})$.
\end{thm}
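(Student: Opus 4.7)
My plan is to induct on $t$, simultaneously maintaining three invariants about the matrices $A_t$ and $B_t$: first, both are $\pm 1$ matrices of order $q^t$; second, $A_t A_t^\top + q B_t B_t^\top = q^t(q+1) I_{q^t}$; and third, $A_t B_t^\top = B_t A_t^\top$. Writing $C_t = \frac{1}{\sqrt{q+1}}A_t + i\frac{\sqrt{q}}{\sqrt{q+1}}B_t$, a direct expansion of $C_t C_t^\ast$ shows that the second and third invariants together give $C_t C_t^\ast = q^t I_{q^t}$, while the first ensures every entry of $C_t$ lies in $X_q$. The base case $t=0$ is immediate.

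For the $\pm 1$ invariant in the inductive step, $A_t = J_q \otimes B_{t-1}$ consists of copies of $B_{t-1}$. Since the Jacobsthal matrix $Q$ has zero diagonal, the supports of $I_q \otimes A_{t-1}$ and $Q \otimes B_{t-1}$ are disjoint, so each entry of $B_t$ is a single $\pm 1$ inherited from $A_{t-1}$ or $\pm B_{t-1}$.

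For the matrix identities I will use three standard facts about $Q$ for $q \equiv 3 \pmod 4$: $Q^\top = -Q$ (since $\chi_q(-1) = -1$); $Q Q^\top = q I_q - J_q$; and $Q J_q = J_q Q = 0$ (the row sums of $Q$ vanish). Expanding $B_t B_t^\top$ as four Kronecker products, the two cross terms $Q^\top \otimes A_{t-1} B_{t-1}^\top$ and $Q \otimes B_{t-1} A_{t-1}^\top$ cancel by the skew-symmetry of $Q$ combined with the inductive symmetry of $A_{t-1} B_{t-1}^\top$. Together with $A_t A_t^\top = q J_q \otimes B_{t-1} B_{t-1}^\top$, the pieces involving $J_q \otimes B_{t-1} B_{t-1}^\top$ in $A_t A_t^\top + q B_t B_t^\top$ cancel, leaving $q I_q \otimes (A_{t-1} A_{t-1}^\top + q B_{t-1} B_{t-1}^\top) = q^t(q+1) I_{q^t}$ by the inductive hypothesis. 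A parallel computation shows that both $A_t B_t^\top$ and $B_t A_t^\top$ reduce to $J_q \otimes A_{t-1} B_{t-1}^\top$, using $J_q Q = Q J_q = 0$ to annihilate the two relevant terms; they then agree by induction, establishing the third invariant.

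The main obstacle is choosing the correct package of invariants to propagate: carrying only the Hadamard identity is insufficient, because the symmetry of $A_t B_t^\top$ is precisely what forces the cross-term cancellation in the expansion of $B_t B_t^\top$. Once this is built into the inductive hypothesis, every cancellation flows from the skew-symmetry of $Q$ and the vanishing of its row sums, and the remaining bookkeeping on Kronecker products is routine.
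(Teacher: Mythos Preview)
The paper does not supply its own proof of this theorem: it is quoted directly from Fender, Kharaghani and Suda with the citation tag \texttt{[Section 3, FKS]} and no accompanying argument. So there is nothing to compare against at the level of proof strategy.

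That said, your proposed proof is correct. The three invariants you carry are exactly the right package: the $\pm 1$ condition puts the entries of $C_t$ in $X_q$, while the identities $A_tA_t^\top + qB_tB_t^\top = q^t(q+1)I$ and $A_tB_t^\top = B_tA_t^\top$ together force $C_tC_t^\ast = q^tI$. Your inductive verification of each invariant goes through as written. In particular, the cancellation of the cross terms in $B_tB_t^\top$ genuinely requires both $Q^\top = -Q$ and the symmetry of $A_{t-1}B_{t-1}^\top$, so you are right that the third invariant cannot be dropped. The only remark I would add is that your proof is essentially the argument behind the alternative ``less technical method'' the paper sketches immediately after Theorem~\ref{skewmorphism}: there the authors observe that any $H \in \mathcal{H}(n,X_m)$ decomposes as $\frac{1}{\sqrt{m+1}}A + \frac{\sqrt{-m}}{\sqrt{m+1}}B$ with $AB^\top = BA^\top$ and $AA^\top + mBB^\top = n(m+1)I_n$, which are exactly your second and third invariants (with a typo in the paper: the coefficient of $BB^\top$ should be $m$, not $1$).
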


Hence there exist $\mathcal{H}(q^{t}, X_q)$ matrices for all prime powers $q \equiv 3 \mod 4$. Since the Paley matrix of order $q + 1$ is skew, we can apply Theorem \ref{skewmorphism} to obtain the following result.

\begin{corollary}
Let $q \equiv 3 \mod 4$ be a prime power. For any integer $n \geq 1$ there exists a (real) Hadamard matrix of order $q^{n} + q^{n-1}$.
\end{corollary}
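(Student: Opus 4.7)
The plan is to chain the two results stated immediately before the corollary. First, apply the FKS theorem with $t = n-1$ to obtain a QUH matrix $M \in \mathcal{H}(q^{n-1}, X_{q})$ (when $n = 1$ the ``matrix'' of order $q^{0} = 1$ is just a single element of $X_{q}$, which is unitary, so the QUH condition is vacuous). Second, since $q \equiv 3 \pmod 4$ is a prime power, the Paley Type~I construction supplies a skew-Hadamard matrix $H$ of order $q+1$. Third, feed $M$ and $H$ into Theorem~\ref{skewmorphism}, taking $m = q$: the output is a real Hadamard matrix of order $q^{n-1}(q+1) = q^{n} + q^{n-1}$, exactly as claimed.

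The one point requiring care is the hypothesis of Theorem~\ref{skewmorphism} that $m+1 = q+1$ is not a perfect square. For prime $q$, the equation $q+1 = k^{2}$ forces $q = (k-1)(k+1)$ with $k-1 = 1$, hence $q = 3$. For a proper prime power $q = p^{a}$ with $a \geq 2$, Mihailescu's theorem rules out $p^{a} + 1 = k^{2}$ except for $p^{a} = 8$, and $8 \not\equiv 3 \pmod 4$. So the only exception is $q = 3$, where the target orders are $4 \cdot 3^{n-1}$; these are classically known to exist (via Sylvester's construction doubled by Paley) and are also delivered by the alternative construction sketched immediately after Theorem~\ref{skewmorphism}, which imposes no restriction on $m+1$.

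I do not foresee any serious obstacle. The corollary is essentially a composition of three existing ingredients (FKS, Paley, and Theorem~\ref{skewmorphism}); the only nontrivial content is keeping track of the dimension $q^{n-1} \cdot (q+1)$ and verifying the perfect-square caveat just discussed.
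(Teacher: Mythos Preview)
Your proposal is correct and follows exactly the paper's intended argument: combine the FKS construction of $\mathcal{H}(q^{t},X_q)$ matrices with the Paley skew-Hadamard matrix of order $q+1$ via Theorem~\ref{skewmorphism}. You are in fact more careful than the paper, which glosses over the perfect-square hypothesis; your handling of the exceptional case $q=3$ (and of $n=1$) is appropriate and can be resolved, as you note, by the alternative construction recorded immediately after Theorem~\ref{skewmorphism}.
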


This result was first discovered by Mukhopadhyay, and later clarified and elaborated by Seberry, \cite{Mukh, SeberrySkew}. Of course, it would be interesting to develop constructions of Hadamard matrices at previously unknown orders. As a first contribution in this direction, we investigate the non-existence of QUH matrices in the next section.

\section{Nonexistence of quaternary unit Hadamard matrices}

The \textit{Galois group} of an irreducible polynomial $p(x)$ is the group of field automorphisms of a splitting field of $p(x)$. Over $\Q$, the order of the Galois group and the degree of the splitting field coincide. The Galois correspondence gives an inclusion-reversing bijection between the lattice of subfields of $\Q[x]/\left(p(x)\right)$ and the subgroups of the Galois group.

An element $x\in \C$ is an \textit{algebraic integer} if it is a root of a monic polynomial in $\Z[x]$. The ring of integers of a number field $k\subseteq\mathbb{C}$ is the largest subring of the algebraic integers contained in $k$, usually denoted $\mathcal{O}_k$. In the ring of integers of a number field, ideals factorise uniquely as a product of \textit{prime ideals}, \cite[Theorem 14]{Marcus}. Studying prime factorisations related to the determinant of a putative complex Hadamard matrix can sometimes yield nonexistence results. This argument is similar to one given by Winterhof for certain Butson Hadamard matrices, \cite{WinterhofExistence}.

First, we introduce terminology for the factorisation of a prime ideal of $\mathbb{Z}$ in $\mathcal{O}_{k}$ for a number field $k$. As is customary we will denote prime ideals in $k$ by the gothic letters $\p$ and $\q$ and rational primes by $p$ and $q$.

\begin{definition} Let $k$ be the splitting field of an irreducible polynomial, and $q$ be a rational prime.
\begin{itemize}
\item $q$ is \textit{inert} in $\mathcal{O}_k$ if $(q)$ is a prime ideal in $\mathcal{O}_k$.
\item If $q$ is not inert then it \textit{splits} in $\mathcal{O}_k$. Let $(q)=\prod \q_i^e$ be the prime ideal decomposition of $(q)$. If $e\geq 1$ then $q$ is \textit{ramified}, otherwise it \textit{splits completely}.
\end{itemize}
\end{definition}

The \textit{discriminant} of a number field is an integer valued invariant that controls the factorisation of rational primes in that field. The following result is a special case of a more general result on the splitting of rational primes on number fields, see Theorems 21, 23 and 24 of Marcus' \textit{Number Fields} for details, \cite{Marcus}.

\begin{thm}\label{splittingthm}
Let $k$ be a number field. If a rational prime $q$ is ramified in $\mathcal{O}_k$, then $q\mid \disc(k)$. Let $k$ be the splitting field of some irreducible polynomial, where the degree of $k$ over $\Q$ is $n=[k:\Q]$. If $q$ is a rational prime such that $q\nmid \disc(k)$, then
\[(q)=\q_1\dots \q_r,\]
where $r|n$. Furthermore the action of the Galois group on $\{\q_1,\dots,\q_r\}$ is transitive.
\end{thm}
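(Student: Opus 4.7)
The statement packages three assertions, and the plan is to derive each of them from standard facts about prime decomposition in Galois extensions, as in Marcus's textbook.

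First, for the implication that a ramified rational prime $q$ divides $\disc(k)$: the plan is to invoke the classical discriminant criterion for ramification, which states that a rational prime ramifies in $\mathcal{O}_{k}$ if and only if it divides $\disc(k)$. This is Theorem 24 of Marcus; here only one direction is required.

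Second, for the factorisation $(q) = \q_{1} \cdots \q_{r}$ with $r \mid n$ in the unramified case: the key is that $k$, being the splitting field of an irreducible polynomial over $\Q$, is Galois over $\Q$. In a Galois extension, all primes $\q_{i}$ of $\mathcal{O}_{k}$ lying above a given rational prime $(q)$ share a common ramification index $e$ and common residue degree $f$, and satisfy $efr = n = [k:\Q]$. Since $q \nmid \disc(k)$, the first part forces $e = 1$, so $(q) = \q_{1} \cdots \q_{r}$ with $fr = n$, and in particular $r$ divides $n$.

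Third, for transitivity of the $\Gal(k/\Q)$-action on $\{\q_{1}, \ldots, \q_{r}\}$: the plan is to invoke the standard transitivity theorem for Galois extensions (Theorem 23 of Marcus). The usual argument, worth recalling, proceeds by contradiction via the Chinese Remainder Theorem: if two primes $\q, \q'$ above $(q)$ lay in distinct Galois orbits, one could choose $x \in \mathcal{O}_{k}$ with $x \equiv 0 \pmod{\sigma\q}$ for every $\sigma \in \Gal(k/\Q)$ and $x \equiv 1 \pmod{\q'}$; the norm $N_{k/\Q}(x) \in \Z$ would then lie in $(q) \subseteq \q'$, contradicting $x \equiv 1 \pmod{\q'}$.

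The main obstacle is purely expository rather than mathematical: everything needed is contained verbatim in the cited chapters of Marcus. The only subtlety is to make sure the hypothesis that $k$ is a splitting field is used precisely where Galoisness of $k/\Q$ is invoked, namely in parts two and three.
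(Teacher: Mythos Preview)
Your proposal is correct and matches the paper's treatment: the paper does not supply its own proof of this theorem but simply cites Theorems 21, 23, and 24 of Marcus, exactly as you do. If anything, you give more detail than the paper, since you sketch the $efr = n$ relation and the Chinese Remainder Theorem argument for transitivity, whereas the paper just points to the reference.
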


In a quadratic extension of $\mathbb{Q}$, the Legendre symbol controls the splitting of prime ideals.

\begin{thm}[p.24, Theorem 25, \cite{Marcus}]\label{quadsplit}
Let $k=\Q[\sqrt{d}]$ where $d$ is a squarefree integer. Then $\disc(k)= d$ if $d \equiv 1 \mod 4$ and $\disc(k) = 4d$ if $d \equiv 2, 3 \mod 4$. Suppose that $q$ is an odd rational prime and $q\nmid \disc(k)$. Then
\begin{itemize}
\item $q$ is inert in $\mathcal{O}_k$ if $\left({d}/{q}\right)=-1$.
\item $q$ splits into distinct prime ideals in $\mathcal{O}_k$ if $\left({d}/{q}\right)=1$.
\end{itemize}
\end{thm}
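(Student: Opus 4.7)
The plan is to derive the discriminant by exhibiting an integral basis for $\mathcal{O}_k$, and then to apply the Kummer--Dedekind factorisation theorem to translate the splitting of $(q)$ into the factorisation of a minimal polynomial modulo $q$.

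First I would pin down $\mathcal{O}_k$. An element $a+b\sqrt{d}$ with $a,b\in\Q$ is an algebraic integer precisely when its minimal polynomial $x^2-2ax+(a^2-db^2)$ lies in $\Z[x]$. The conditions $2a\in\Z$ and $a^2-db^2\in\Z$, together with the hypothesis that $d$ is squarefree, force $a,b\in\Z$ when $d\equiv 2,3\pmod 4$ and permit the half-integers $a,b\in\tfrac{1}{2}\Z$ with $2a\equiv 2b\pmod 2$ when $d\equiv 1\pmod 4$. This yields the integral bases $\{1,\sqrt{d}\}$ or $\{1,(1+\sqrt{d})/2\}$ respectively, and $\disc(k)$ is computed as the square of the determinant of the matrix of Galois conjugates of the chosen basis, giving $4d$ and $d$ in the two cases.

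In both cases $\mathcal{O}_k=\Z[\theta]$ for a single generator $\theta$, so the Kummer--Dedekind theorem applies with trivial index: the factorisation of $(q)$ matches the factorisation of the minimal polynomial $f(x)$ of $\theta$ modulo $q$, with each prime factor $\q_i$ of the form $(q,\,p_i(\theta))$. For $d\equiv 2,3\pmod 4$ we have $f(x)=x^2-d$; for $d\equiv 1\pmod 4$ we have $f(x)=x^2-x+(1-d)/4$, whose discriminant over $\F_q$ is again $d$. In either case, when $q\nmid\disc(k)$, the quadratic $f$ splits into two distinct linear factors mod $q$ exactly when $d$ is a nonzero square modulo $q$, that is $\left(d/q\right)=1$, yielding two distinct prime ideals; otherwise $f$ is irreducible mod $q$ and $(q)$ remains prime.

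The main technical obstacle is verifying the integral basis so that Kummer--Dedekind can be invoked without index complications, and organising the congruence conditions on $d\bmod 4$ consistently with the parity of $q$. Once the integral basis is established, the residue-field computation is essentially automatic, and the assumption $q\nmid\disc(k)$ removes the ramified alternative $q\mid d$ (and also the troublesome prime $q=2$ when $d\equiv 2,3\pmod 4$), leaving precisely the two claimed cases.
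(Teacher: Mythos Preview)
The paper does not actually prove this theorem; it is quoted as a background result from Marcus' \textit{Number Fields} (Theorem~25) and used without proof. Your argument is correct and is essentially the standard textbook proof: determine $\mathcal{O}_k=\Z[\theta]$ by the congruence class of $d\bmod 4$, read off the discriminant from the integral basis, and then invoke Kummer--Dedekind so that the splitting of $(q)$ is governed by whether $x^2-d$ (equivalently, the quadratic with discriminant $d$) factors over $\F_q$, i.e.\ by the Legendre symbol $(d/q)$. There is nothing to compare against in the paper itself.
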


We will study these concepts for the field $K=\Q[\alpha]$, which by Proposition \ref{fieldprop} is the splitting field of $\m(x)$. Since $2/(\alpha_m+\alpha^{\ast}_m)=\sqrt{m+1}$ and $(\sqrt{m+1})\alpha_m -1= \sqrt{-m}$ we have an isomorphism $\Q[\alpha_m]\simeq \Q[\sqrt{-m},\sqrt{m+1}]$. There are three intermediate subfields of $K$, as illustrated.

\begin{center}
\begin{tikzpicture}[node distance =2cm]
\node(K) {$K=\Q\left[\sqrt{m+1},\sqrt{-m}\right]$};
\node(K2)[below=.75cm of K]{$K_2=\Q\left[\sqrt{m+1}\right]$};
\node(K1)[left=1.25cm of K2] {$K_1=\Q\left[\sqrt{-m}\right]$};
\node(K3)[right=1.25cm of K2]{$K_3=\Q\left[\sqrt{-m(m+1)}\right]$};
\node(Q)[below =2.25cm of K]{$\Q$};
\foreach \x in {1,2,3}{
	\draw(K\x)--(K);
	\draw(Q)--(K\x);
}
\end{tikzpicture}
\end{center}
\begin{center}
\textit{The lattice of subfields of $K$.}
\end{center}

The discriminant of a biquadratic extension is given as an exercise by Marcus.

\begin{proposition}[p.36-37, \cite{Marcus}]\label{discprop}
The discriminant of a biquadratic extension $k=\Q[\sqrt{a},\sqrt{b}]$ where $\gcd(a,b)=1$ is
\[\disc(k)=\disc(k_1)\disc(k_2)\disc(k_3),\]
where $k_1=\Q[\sqrt{a}]$, $k_2=\Q[\sqrt{b}]$ and $k_3=\Q[\sqrt{ab}]$.
\end{proposition}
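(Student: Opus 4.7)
The plan is to compute $\disc(K)$ from an explicit integral basis of $\mathcal{O}_K$ and compare it term-by-term with the product $\disc(k_1)\disc(k_2)\disc(k_3)$ given by Theorem~\ref{quadsplit}. Without loss we may assume $a,b$ are squarefree; coprimality then forces $ab$ to be squarefree as well, so Theorem~\ref{quadsplit} determines each $\disc(k_i)$ in terms of the residue of the relevant squarefree generator modulo $4$.

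First I would compute the discriminant of the \emph{naive} $\Q$-basis $\mathcal{B} = \{1, \sqrt{a}, \sqrt{b}, \sqrt{ab}\}$ of $K$. The matrix of Galois conjugates $(\sigma_{i}(e_{j}))$, with $\sigma_{i}$ running over the four automorphisms of $\Gal(K/\Q)$ (independent sign changes on $\sqrt{a}$ and $\sqrt{b}$), factors as the product of a $4 \times 4$ Hadamard sign matrix with the diagonal matrix $\operatorname{diag}(1, \sqrt{a}, \sqrt{b}, \sqrt{ab})$. Squaring the determinant yields $\disc(\mathcal{B}) = 256(ab)^{2}$.

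Next, case analysis on $(a \bmod 4,\, b \bmod 4)$ upgrades $\mathcal{B}$ to an integral basis of $\mathcal{O}_K$. Since $\gcd(a,b)=1$ the pair $(a,b)$ cannot both be even, leaving a small set of cases up to symmetry. In each case one adjoins certain integral combinations with $2$-power denominators: typically $\tfrac{1+\sqrt{d}}{2}$ whenever $d \in \{a, b, ab\}$ is $\equiv 1 \pmod 4$, and in the case $a \equiv b \equiv 1 \pmod 4$ also the compound element $\tfrac{(1+\sqrt{a})(1+\sqrt{b})}{4}$ with denominator $4$. Computing $\disc(K) = \disc(\mathcal{B})/[\mathcal{O}_K : \Z[\mathcal{B}]]^{2}$ in each case and comparing with $\prod_{i} \disc(k_i)$, the factors of $4$ contributed on the right by each $d_i \not\equiv 1 \pmod 4$ exactly match the factors lost from the index on the left, establishing the equality.

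The main obstacle is pinning down the integral basis when $a \equiv b \equiv 1 \pmod 4$: here one must verify both that $\tfrac{(1+\sqrt{a})(1+\sqrt{b})}{4}$ is integral (by showing all coefficients of its minimal polynomial lie in $\Z$) and that no further element with denominator divisible by $2$ is integral; the latter is conveniently confirmed by observing that $2$ is unramified in each quadratic subfield precisely in this case. Once the integral basis is in hand, the remaining verification is routine linear algebra.
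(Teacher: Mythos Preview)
The paper does not actually prove this proposition: it is quoted as an exercise from Marcus and used as a black box. So there is no ``paper's own proof'' to compare against, and your proposal stands on its own.

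Your outline is the standard route through Marcus's exercise and is correct in its broad strokes: compute $\disc(\mathcal{B}) = 256(ab)^{2}$ for the naive basis, then in each congruence case exhibit an integral basis, compute the index, and match against $\prod_{i}\disc(k_{i})$. The one place your sketch is thin is the verification that the proposed $\Z$-module really is all of $\mathcal{O}_{K}$. You address the prime $2$ in the $a\equiv b\equiv 1\pmod 4$ case, but the same kind of check is needed for odd primes dividing $ab$ in every case: you must argue that no such prime divides the residual index $[\mathcal{O}_{K}:\Z\text{-span of proposed basis}]$. This is not hard (for an odd prime $p\mid a$, coprimality gives $p\nmid b$, and a local computation shows $p$ contributes exactly $p^{2}$ to $\disc(K)$, matching the $p^{2}$ already visible in your basis discriminant), but it should be stated.

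As an alternative you might mention the conductor--discriminant formula for abelian extensions: for $K/\Q$ abelian one has $\disc(K)=\prod_{\chi}\mathfrak{f}(\chi)$ over the characters of $\Gal(K/\Q)$, and here the three nontrivial characters are exactly the quadratic characters cutting out $k_{1},k_{2},k_{3}$, with conductors $\disc(k_{i})$. This gives the identity in one line, at the cost of invoking heavier machinery than the paper (or Marcus's exercise) intends.
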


Let $G=\Gal(K/\Q)$ be the Galois group the splitting field of $\m(x)$. By the Galois correspondence $G$ has order 4, and has three distinct subgroups of order 2. So $G$ is elementary abelian, generated by $\sigma:\sqrt{m+1}\mapsto -\sqrt{m+1}$ and $\tau:\sqrt{-m}\mapsto -\sqrt{-m}$. We identify $\tau$ with complex conjugation. Note that $K_1=\Fix(\sigma)$ is the fixed field of $\sigma$, that $K_2=\Fix(\tau)$ is the fixed field of $\tau$ and $K_3=\Fix(\sigma\tau)$ is the fixed field of $\sigma\tau$.

From now on, let $m = p$ be a prime congruent to $3$ modulo $4$, and write $s$ for the squarefree part of $p+1$. Then $K\simeq\Q[\sqrt{-p},\sqrt{s}]$, and applying Proposition \ref{discprop} we have
\[\disc(K)=
\begin{cases}
s^2p^2 &\hbox{ if } s\equiv 1\mod 4\\
16s^2p^2 & \hbox{ if } s\equiv 2,3 \mod 4
\end{cases}.
\]

Let $q$ be a prime number. By Theorem \ref{splittingthm}, the prime $q$ ramifies in $\mathcal{O}_K$ only if $q=p$ or $q|s$. Next we describe which non-ramified primes split in $\mathcal{O}_{K}$.

\begin{proposition}\label{biquadsplit}
Let $q$ be a rational prime not dividing $\disc(k)$. Then one of the following holds:
\begin{itemize}
\item $(q)=\q_1\q_2\q_3\q_4$ in $\mathcal{O}_K$ and $q$ splits in every subfield of $K$.
\item $(q)=\q_1\q_2$ in $\mathcal{O}_K$ and $q$ splits in one proper subfield of $K$, being inert in the other two.
\end{itemize}
\end{proposition}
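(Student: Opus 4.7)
The plan is to apply Theorem \ref{quadsplit} to each of the three quadratic subfields of $K$ and reconcile the resulting Legendre-symbol conditions with the decomposition of $(q)$ in $\mathcal{O}_K$ via decomposition-group considerations.

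First I would set $d_1 = -p$, $d_2 = s$ and $d_3 = -ps$, so that $K_i = \Q[\sqrt{d_i}]$ for $i=1,2,3$. Since $s\mid p+1$ and $p$ is prime, $\gcd(p,s)=1$ and each $d_i$ is squarefree. Crucially, $d_1 d_2 d_3 = (ps)^2$ is a perfect square, so by the multiplicativity of the Legendre symbol,
\[
    \chi_q(d_1)\,\chi_q(d_2)\,\chi_q(d_3) \;=\; \chi_q\bigl((ps)^2\bigr) \;=\; 1.
\]
An even number (zero or two) of the $\chi_q(d_i)$ are therefore $-1$, so by Theorem \ref{quadsplit} (applied to the odd unramified prime $q$), $q$ splits either in all three of the subfields $K_i$ or in exactly one of them.

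Next I would invoke the standard fact that for the unramified prime $q$, the decomposition group $D = D_{\q}$ of a prime $\q$ of $\mathcal{O}_K$ above $q$ is cyclic, that $r = [G:D]$, and that the restriction $G \twoheadrightarrow G/H_i = \Gal(K_i/\Q)$ sends $D$ onto the decomposition group of $\q\cap\mathcal{O}_{K_i}$. Consequently, $q$ splits in $K_i$ if and only if $D \subseteq H_i$.

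Combining the two pieces: if $q$ splits in all three subfields, then $D \subseteq H_1 \cap H_2 \cap H_3 = \{e\}$, so $r=4$ and we land in the first case of the proposition. If $q$ splits in exactly one subfield $K_j$, then $D \subseteq H_j$ but $D \not\subseteq H_i$ for $i\neq j$; together with $|H_j|=2$ this forces $D = H_j$, hence $r=2$, matching the second case. The possibility $r=1$ is excluded because it would demand $D = G \cong (\Z/2\Z)^2$, which is not cyclic. The main technical point is the compatibility of decomposition groups under restriction to an intermediate subfield; this is a standard result in algebraic number theory and I would cite it from Marcus rather than re-derive it.
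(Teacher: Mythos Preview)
Your argument is correct and reaches the same conclusion as the paper, but the route is genuinely different. You work \emph{from the subfields up}: the Legendre-symbol identity $\chi_q(d_1)\chi_q(d_2)\chi_q(d_3)=1$ tells you that $q$ splits in either all three or exactly one $K_i$, and then decomposition-group machinery (cyclicity of $D_\q$ in the unramified case, compatibility with restriction) pins down $r$. The paper instead works \emph{from $K$ down}: it first uses Theorem~\ref{splittingthm} to get $r\in\{1,2,4\}$, rules out $r=1$ by the same Legendre-symbol trick, and then for each of $r=4$ and $r=2$ writes out the transitive Galois action on $\{\q_i\}$ explicitly to read off which products $\q_i\q_j$ land in which fixed field $K_i$. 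Your approach is cleaner and more structural, but it imports facts (cyclicity of the decomposition group, functoriality under restriction) that the paper never states; the paper's case-by-case analysis stays entirely within the transitivity statement of Theorem~\ref{splittingthm} and is therefore more self-contained relative to what has been quoted. One small point: you should define $H_i=\Gal(K/K_i)$ before using it, and note that your appeal to Theorem~\ref{quadsplit} tacitly assumes $q$ is odd, just as the paper's proof does.
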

\begin{proof}
By Theorem \ref{quadsplit}, the prime $q$ splits in $K_{1}$ if and only if $\left({-p}/{q}\right) = 1$, and $q$ splits in $K_{2}$ if and only if $\left({s}/{q}\right) = 1$.
Suppose that $\left({-p}/{q}\right)=\left({s}/{q}\right)=-1$. Then $\left({-ps}/{q}\right)=\left({-p}/{q}\right)\left({s}/{q}\right)=1$, so $q$ splits in $K_3$.
We conclude that no rational prime is inert in $K$.

Since by assumption $q$ does not ramify, Theorem \ref{splittingthm} tells us that $q$ splits in $\mathcal{O}_K$ into two or four prime ideals. Suppose that $(q)=\q_1\q_2\q_3\q_4$. Then up to a relabeling of the primes $\q_i$ we can assume that
\begin{center}
\begin{tabular}{l l}
$\q_1^{\sigma}=\q_2,$ & $\q_3^{\sigma}=\q_4$\\
$\q_1^{\tau}=\q_3,$ & $\q_2^{\tau}=\q_4$\\
$\q_1^{\sigma\tau}=\q_4,$ & $\q_2^{\sigma\tau}=\q_3$\\
\end{tabular}
\end{center}
This implies that $(\q_1\q_2)^{\sigma}=\q_1\q_2$ and $(\q_3\q_4)^{\sigma}=\q_3\q_4$, therefore $\q_1\q_2$ and $\q_3\q_4$ are ideals in the fixed field  $K_1$ of $\sigma$ and thus $q$ splits as $(q)=(\q_1\q_2)(\q_3\q_4)$ in $K_1$. We can show analogously that $q$ splits in $K_2$ and $K_3$. Suppose next that $q$ splits in $K$ as $\q_1\q_2$. Then the Galois group acts as in one of the following possibilities.
\begin{center}
\begin{tabular}{|c|c|c|c|}
\hline
 $\q_1^{\sigma}$ & $\q_1^{\tau}$ & $\q_1^{\sigma\tau}$ & Subfield containing $\q_1$ and $\q_2$\\
\hline
 $\q_1$ & $\q_2$ & $\q_2$ & $K_1=\Fix(\sigma)$\\
 $\q_2$ & $\q_1$ & $\q_2$ & $K_2=\Fix(\tau)$\\
 $\q_2$ & $\q_2$ & $\q_1$ & $K_3=\Fix(\sigma\tau)$\\
\hline
\end{tabular}
\end{center}
In each case, there is exactly one non-identity element $g\in G$ fixing both $\q_1$ and $\q_2$. So $q$ splits in the fixed field of $g$, and is inert in the other two intermediate subfields. 
\end{proof}

In our application to QUH matrices, we will require the following special case of Proposition \ref{biquadsplit}.
\begin{corollary}\label{splitcor}
Let $q$ be an odd rational prime $q$, coprime to both $p$ and $s$. In $\mathcal{O}_K$, we have $(q)=\q_1\q_2$ with $\q_1^\tau=\q_1$ and $\q_2^\tau=\q_2$ if and only if $(-p/q)=-1$ and $(s/q)=1$.
\end{corollary}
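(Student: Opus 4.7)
The plan is to specialize Proposition \ref{biquadsplit} to the case where $\tau$ is precisely the non-identity element of $G$ fixing both prime factors of $q$. Looking at the table in the proof of Proposition \ref{biquadsplit}, the requirement $\q_1^{\tau}=\q_1$ (equivalently $\q_2^{\tau}=\q_2$) picks out the second row, where the intermediate subfield containing $\q_1$ and $\q_2$ is $K_2=\Fix(\tau)\simeq \Q[\sqrt{s}]$. So the corollary really says: $(q)=\q_1\q_2$ with both primes $\tau$-fixed if and only if $q$ splits in $K_2$ but is inert in $K_1$ and $K_3$.

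For the forward direction, I would assume $(q)=\q_1\q_2$ with $\q_i^{\tau}=\q_i$ and read off from the proof of Proposition \ref{biquadsplit} that $q$ splits in $K_2=\Fix(\tau)$ and is inert in both $K_1$ and $K_3$. Applying Theorem \ref{quadsplit} to the quadratic extensions $K_1=\Q[\sqrt{-p}]$ and $K_2=\Q[\sqrt{s}]$ then immediately yields $(-p/q)=-1$ and $(s/q)=1$.

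For the converse, I would start from $(-p/q)=-1$ and $(s/q)=1$, compute $(-ps/q)=(-p/q)(s/q)=-1$ by multiplicativity of the Legendre symbol, and use Theorem \ref{quadsplit} to conclude that $q$ is inert in both $K_1$ and $K_3$ while splitting in $K_2$. Proposition \ref{biquadsplit} then rules out the four-prime decomposition, since that case would force $q$ to split in every subfield of $K$, contradicting inertness in $K_1$. Hence $(q)=\q_1\q_2$. Consulting the table in the proof of Proposition \ref{biquadsplit} once more, the only row in which $q$ is inert in $K_1$ and $K_3$ but splits in $K_2$ is the second, which gives $\q_1^{\tau}=\q_1$ and $\q_2^{\tau}=\q_2$.

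There is no real obstacle here: the corollary is essentially a bookkeeping consequence of Theorem \ref{quadsplit} and the case analysis carried out inside Proposition \ref{biquadsplit}. The only small observation worth flagging is that the hypothesis on $(-p/q)$ and $(s/q)$ already determines the splitting behaviour in $K_3$, so the statement needs no third Legendre condition.
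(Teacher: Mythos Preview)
Your argument is correct and follows the same route as the paper: both directions are read off from the case analysis in Proposition~\ref{biquadsplit} together with Theorem~\ref{quadsplit}. Your write-up is simply more explicit than the paper's, which dispatches the converse in a single sentence, whereas you spell out the multiplicativity step $(-ps/q)=(-p/q)(s/q)=-1$ and the elimination of the four-prime case.
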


\begin{proof}
Since $\q_1^\tau=\q_1$ it must be the case that $\q_{1}^{\sigma} = \q_{2}$ and, by Proposition \ref{biquadsplit}, $q$ splits in $K_{2}$ as $\q_{1}\q_{2}$. So by Theorem \ref{quadsplit}, we must have $(s/q) = 1$. Furthermore, $(q)$ must be inert in $K_{1}$, from which we obtain $(-p/q)= -1$ as required. The converse follows from Theorem \ref{quadsplit} and Proposition \ref{biquadsplit}.
\end{proof}

Recall that the action of $\tau$ on $K$ corresponds to the action of complex conjugation on $K$. Therefore the case above is equivalent to $(q)=\q_1\q_2$ with $\q_1^*=\q_1$ and $\q_2^*=\q_2$. We can now formulate our main nonexistence theorem.

\begin{thm}\label{nonexistence}
Let $n$ be an odd integer, with squarefree part $t$. Let $p\equiv 3\mod 4$ be a prime number such that the squarefree part of $p+1$ is $s>1$. If there exists an odd prime $q$ such that
\begin{itemize}
\item $q$ divides $t$,
\item $\left({s}/{q}\right)=1$, and
\item $\left({-p}/{q}\right)=-1$,
\end{itemize}
then $\mathcal{H}(n,X_p)$ is empty.
\end{thm}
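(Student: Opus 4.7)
The plan is to derive a contradiction by computing $\mathfrak{q}$-adic valuations at a prime of $\mathcal{O}_K$ lying over $q$. Suppose for contradiction that $M \in \mathcal{H}(n,X_p)$, and form $N = \sqrt{p+1}\,M$. Every entry of $N$ lies in $\{\pm 1 \pm \sqrt{-p}\}\subset \mathcal{O}_{K}$ (since $\sqrt{-p}$ is a root of $x^{2}+p$), so $\det(N)\in \mathcal{O}_{K}$. From $MM^{\ast}=nI$ we obtain $NN^{\ast}=(p+1)nI$, and taking determinants -- identifying complex conjugation with the Galois element $\tau$ -- gives the key identity
\[\det(N)\cdot\det(N)^{\tau} \;=\; (p+1)^{n}\, n^{n}\,.\]

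Next I would invoke Corollary \ref{splitcor}: the hypotheses $(-p/q)=-1$ and $(s/q)=1$ ensure $q$ is odd and coprime to $sp$, and yield a factorisation $(q)=\mathfrak{q}_{1}\mathfrak{q}_{2}$ in $\mathcal{O}_K$ with each $\mathfrak{q}_{i}$ fixed by $\tau$. Setting $v = v_{\mathfrak{q}_{1}}$, the self-conjugacy gives $v(\det(N)^{\tau})=v(\det(N))$, so the valuation of the left-hand side of the displayed identity is $2v(\det(N))$, an \emph{even} integer.

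To reach the contradiction, I would compute the valuation of the right-hand side. Because $(q)$ is the product of two \emph{distinct} primes, $v$ restricts to the ordinary $q$-adic valuation on $\mathbb{Z}$. Writing $n = t w^{2}$ with $t$ squarefree and $q\mid t$ gives $v_q(n)= 1 + 2v_q(w)$, odd; writing $p+1 = s u^{2}$, with $q\nmid s$ by hypothesis, gives $v_q(p+1)= 2v_q(u)$, even. Therefore
\[v\bigl((p+1)^{n} n^{n}\bigr) \;=\; n\bigl(v_q(p+1) + v_q(n)\bigr)\]
is $n$ times an odd integer, hence odd (since $n$ is odd). The equality even $=$ odd is absurd, so no such $M$ can exist.

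The main subtlety -- and the reason the hypothesis $(s/q)=1$ is needed alongside $(-p/q)=-1$ -- is the self-conjugacy $\mathfrak{q}_{i}^{\tau}=\mathfrak{q}_{i}$. Without it, the left-hand valuation would become $v_{\mathfrak{q}_{1}}(\det N) + v_{\mathfrak{q}_{2}}(\det N)$, carrying no parity obstruction, and the argument would collapse. Isolating this precise splitting pattern of $q$ in $\mathcal{O}_K$ is exactly the role of Corollary \ref{splitcor} in the proof.
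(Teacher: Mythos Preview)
Your argument is correct and follows essentially the same route as the paper: both proofs exploit Corollary~\ref{splitcor} to find a prime $\mathfrak{q}_1$ of $\mathcal{O}_K$ above $q$ that is fixed by complex conjugation, and then derive a parity contradiction between the $\mathfrak{q}_1$-multiplicity of $\det(N)\det(N)^{\tau}$ (necessarily even) and that of the integer $(p+1)^{n}n^{n}$ (odd). The only cosmetic differences are that the paper scales by $(p+1)^{n}$ rather than $(\sqrt{p+1})^{n}$, obtaining $DD^{\ast}=(p+1)^{2n}n^{n}$, and phrases the contradiction in terms of ideal factorisations rather than valuations; neither change affects the substance.
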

\begin{proof}
Let $M\in \mathcal{H}(n,X_p)$ and set $D=(p+1)^n\det M$. Then $ D \in \mathcal{O}_K$, since $(p+1)\alpha\in\mathcal{O}_K$ for every $\alpha\in X_p$. The matrix $H$ is complex Hadamard, therefore $DD^*=(p+1)^{2n}n^n=a^2t^n$, for some $a\in \Z$. By Corollary \ref{splitcor}, $(q)=\q_1\q_2$ in $\mathcal{O}_K$ with  $\q_1=\q_1^*$. We have that $q|t$, so since $n$ is odd the prime ideal $\q_1$ appears with odd multiplicity in the decomposition of $(p+1)^{2n}n^n$ in $\mathcal{O}_K$.
Since $\q_{1}$ is prime and divides the product $(D)(D^{\ast})$, it divides one of the factors; without loss of generality, suppose that $\q_1$ divides $(D)$. So $(D)$ factors into prime ideals uniquely as
	\[(D)=\q_1^{\ell}\prod_j \p_j^{\ell_j},\]
Then $(D^*)=(D)^*=\q_1^{\ell}\prod_j(\p_j^*)^{\ell_j}$. But implies that $\q_1$ appears with even multiplicity in $(D)(D^{\ast})$, contradicting its odd multiplicity in $(p+1)^{2n}n^{n}$.
\end{proof}

The only prime of the form $n^2-1$ is 3. In this case the matrices $\mathcal{H}(n,X_3)$ coincide with the unreal $BH(n,6)$ matrices of Compton, Craigen and de Launey. The set $\mathcal{H}(n, X_{3})$ is empty if there exists a prime $q \equiv 5 \pmod {6}$ which divides the square-free part of $n$ (see Theorem 2 of \cite{CCdeL} or Theorem 5 of \cite{WinterhofExistence} for a proof).\\

We conclude this paper by discussing some consequences of Theorem \ref{nonexistence}. Suppose first that $p=7$. Then a prime $q$ satisfying both $(q/7) = -1$ and $(2/q) = 1$ cannot divide the square-free part of $n$. By quadratic reciprocity, these are the primes which satisfy both $q \equiv 3, 5, 6 \mod 7$ and $q \equiv 1, 7 \mod 8$. By Dirichlet's Theorem on primes in arithmetic progressions, there are infinitely many such primes. Similar results hold for each prime $p$, as illustrated in the table below.\\

\begin{center}
\begin{tabular}{|l |l |}
\hline
$p$ & $n$ \\
\hline
 $7$ & $17,31,41,47,51,73,85,89,93,97,103,119,123,141,\dots$\\
 $11$ & $13,39,61,65,73,83,91,107,109,117,131,143,167,\dots$\\
 $19$ & $29,31,41,59,71,79,87,89,93,109,123,145,151,\dots$\\
 $23$ & $5,15,19,35,43,45,53,55,57,65,67,85,95,97,105,\dots$\\
 $31$ & $17,23,51,69,73,79,85,89,115,119,127,137,151,\dots$\\
 $43$ & $5,7,15,19,21,35,37,45,55,57,63,65,77,85,89,91,\dots$\\
 \hline
\end{tabular}\\\vspace{0.5cm}

\textit{Pairs $(n,p)$ such that $\mathcal{H}(n,p)$ is empty.}
\end{center}

In fact, it is a consequence of the Chebotarev Density Theorem that the proportion of primes $q\leq N$ to which the conditions of Theorem \ref{nonexistence} apply tends to $1/4$ as $N$ tends to infinity. In particular, there are infinitely many primes which obstruct the existence of matrices in $\mathcal{H}(n, X_{p})$ for any fixed $p$.\\

To illustrate Theorem \ref{nonexistence} in a case where not all ideals are principal, we consider $p=43$ and $q=5$, then $s=11$. We have $(5/43)=-1$, thus the prime $5$ should be inert in $\mathcal{O}_{K_1}$.
By Proposition \ref{biquadsplit}, $(5)$ splits in $\mathcal{O}_K$ as the product of two prime ideals in $\mathcal{O}_{K_2}$, indeed  $(5)=(5,1+\sqrt{11})(5,1-\sqrt{11})$ in $\mathcal{O}_K$. If there exists $H\in\mathcal{H}(5,X_{43})$ then $D=11^5\det H$ and $DD^*=11^{10}\cdot 5^5$. Thus in $\mathcal{O}_K$ this means
\[(D)(D)^* = (11^{5})^2 (5,1+\sqrt{11})^{5}(5,1-\sqrt{11})^{5}.\]
The ideal $(5,1+\sqrt{11})=(5,1+\sqrt{11})^*$ appears with even multiplicity on the left hand side and odd multiplicity on the right hand side. Hence $\mathcal{H}(5,X_{43})$ is empty.

\section*{Acknowledgements}

This research was completed while JP and PH were undergraduates and GNP was a doctoral student at Worcester Polytechnic Institute.
JP was supported by a Student Undergraduate Research Fellowship sponsored by the office of the Dean of Arts and Sciences. PH and GNP
were supported by P\'{O}C's startup funds.
The authors acknowledge the anonymous referees for many helpful suggestions which improved the exposition of the paper.

\bibliographystyle{abbrv}
\flushleft{
\bibliography{HeikoopPugmirePonassoOCathain}
}

\end{document}